\begin{document}

\newtheorem{theorem}{Theorem}
\newtheorem{conjecture}[theorem]{Conjecture}
\newtheorem{proposition}[theorem]{Proposition}
\newtheorem{question}[theorem]{Question}
\newtheorem{definition}{Definition}
\newtheorem{lemma}[theorem]{Lemma}
\newtheorem{cor}[theorem]{Corollary}
\newtheorem{obs}[theorem]{Observation}
\newtheorem{proc}[theorem]{Procedure}
\newcommand{\comments}[1]{} 
\def\Z{\mathbb Z}
\def\Za{\mathbb Z^\ast}
\def\Fq{{\mathbb F}_q}
\def\R{\mathbb R}
\def\N{\mathbb N}
\def\k{\kappa}
\newcommand{\todo}[1]{\textbf{\textcolor{red}{[To Do: #1]}}}

\title[Emergence of flat points]{Curvature-Torsion Entropy for Twisted Curves under Curve Shortening Flow}

\author{Gabriel Khan}
\email{gkhan@iastate.edu}

\date{\today}

\maketitle

\begin{abstract}
We study curve-shortening flow for twisted curves in $\mathbb{R}^3$ (i.e., curves with nowhere vanishing curvature $\kappa$ and torsion $\tau$) and define a notion of torsion-curvature entropy. Using this functional, we show that either the curve develops an inflection point or the eventual singularity is highly irregular (and likely impossible). In particular, it must be a Type II singularity which admits sequences along which $\frac{\tau}{\kappa^2} \to \infty$. This contrasts strongly with Altschuler's planarity theorem \cite{A}, which shows that along any essential blow-up sequence, $\frac{\tau}{\kappa} \to 0$.
\end{abstract}

\section{Introduction}

\emph{Curve shortening flow} is the geometric flow defined by the equation
\begin{equation}\label{eq:CSF}
\partial_t {\gamma} = \kappa N,
\end{equation}
where $\gamma$ is a smooth immersed curve in $\mathbb{R}^n$, $\kappa$ is the curvature and $N$ is the unit normal vector. Solutions to this flow consist of a family of curves $\gamma_t$ with $t \in [0,~\omega)$ with $\gamma_0$ (which we will often denote as $\gamma$) as the initial condition. 

This flow was introduced by Gage and Hamilton in 1986 as the $L^2$ gradient flow for the arc length (i.e., the flow which shortens curves the quickest) \cite{GH}. Their work established short-time existence and uniqueness for the flow and showed that if one starts with a closed convex curve in the plane, the curve shrinks to a point while becoming asymptotically round. Put succinctly, convex curves shrink to round points. The following year, Grayson \cite{G} proved that any curve which is initially embedded (i.e., does not self-intersect) in $\mathbb{R}^2$ eventually becomes convex under the flow, and thus converges to a round point.

In two dimensions, curve shortening flow has two fundamental properties which play a crucial role in the analysis. First, if a curve starts as an embedded curve, it remains so until it reaches a singularity.\footnote{More generally, if one considers mean-curvature flow for co-dimension one hypersurfaces, the flow is self-avoiding. In other words, surfaces that do not intersect initially will never intersect in the future.} Second, the number of inflection points is non-increasing under the flow. Both of these facts can be shown by a straightforward application of the parabolic maximum principle but fail for curve shortening flow in higher dimensions.

\subsection{Singularity Formation}

Since curve shortening flow shrinks the length of curves, a closed curve must encounter a singularity at some time $\omega$.\footnote{For closed and embedded curves in the plane, the area decreases linearly at the rate of $-2 \pi$, so it is possible to compute the time $\omega$ explicitly.} The singularities are fairly well understood in two dimensions, and a natural question is to extend these results to curve shortening flow in dimensions three or higher. To discuss this further, we first introduce some notation.

Curve shortening flow exists so long as the curvature is bounded. To study singularities, we consider a \emph{blow-up sequence}, which is a sequence of points in space-time $(p_m, t_m)$ such that the curvature at $(p_m, t_m)$ goes to infinity. A blow-up sequence is said to be \emph{essential} if $\kappa^2(p_m, t_m) \ge \rho M_{t_m}$ for some $\rho > 0$ where  \begin{equation} M_{t} = \sup_{p \in \gamma} \kappa^2(p, t). \end{equation}

One can divide the singularities of curve shortening flow into two broad classes, \emph{Type I} and \emph{Type II}. A singularity is said to be Type I if \[\limsup_{t \to \omega} M_{t} \cdot (\omega - t)\] is bounded and Type II otherwise. Type I singularities are \emph{global} singularities, in that the entire curve shrinks to a point while converging in $C^\infty$ to a homothetic (i.e., self-similar) shrinking curve. For closed curves, the possible models for these singularities were classified by Abresch and Langer \cite{AL}. On the other hand, Type II singularities are \emph{local} in that the curvature goes to infinity in a small region while possibly remaining bounded elsewhere. Such singularities appear as kinks in the curve and admit an essential blow-up sequence which (after rescaling) converges in $C^\infty$ to the Grim Reaper curve $y = -\log(\cos x)$, which is a translating soliton under the flow.

As we have mentioned, curve shortening flow behaves differently in higher dimensions, which complicates its analysis. However, Altschuler \cite{A} established that under the flow, curves in three-dimensional space becomes asymptotically \emph{planar} near any singularity. More precisely, along any essential blow-up sequence $(p_m,t_m)$, the torsion $\tau$ satisfies 
\[ \lim_{m \to \infty} \frac{\tau}{\k} (p_m,t_m)=0.\] This result was then extended to curve shortening flow in $\mathbb{R}^n$ by Yan and Jiao \cite{YanJiao}. As such, the singularity models for curve shortening flow in higher dimensions are the same as for curve shortening flow in the plane, although we cannot rule out the appearance of Abresch-Langer solutions or Type II singularities even when the initial curve is embedded.

\section{Twisted Curves and the Curvature-Torsion Entropy}

\begin{figure}
    \centering
\includegraphics[width=.7\linewidth]{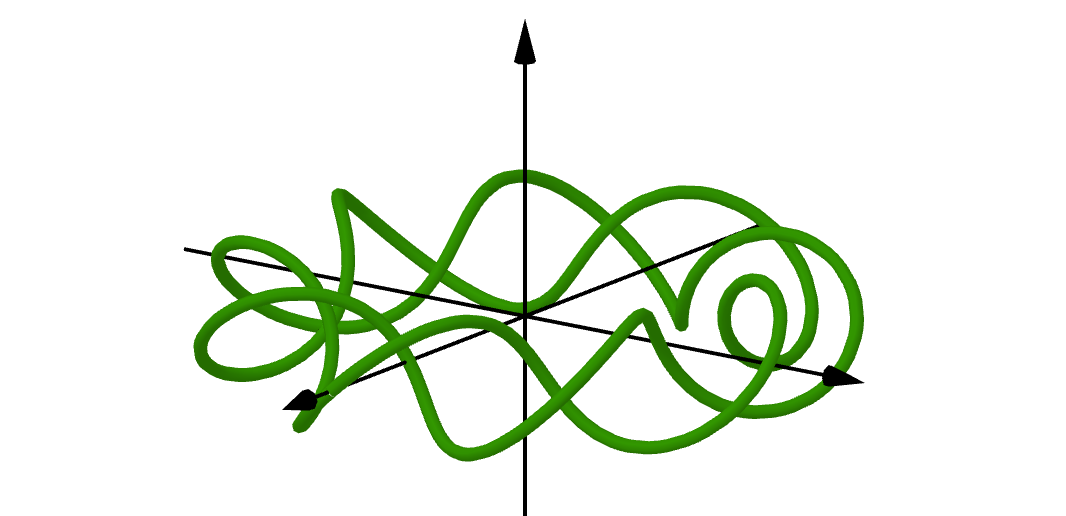}
\caption{A twisted curve}
\label{fig:A twisted curve}
\end{figure}

A curve in $\mathbb{R}^n$ is said to be \emph{twisted} if it has linearly independent derivatives up to order $n$ \cite{Costa}. In three dimensions, this corresponds to the nowhere vanishing of the curvature and torsion along the curve. The main focus of this paper is to study curve shortening flow for twisted curves (i.e., curves which are twisted). In particular, we focus on the possible singularities which emerge when a curve is twisted.

\begin{theorem} \label{Main Theorem}
    Suppose $\gamma_t$ is a solution to curve shortening flow which is twisted up to the time of singularity $\omega$. Then we have the following.
    \begin{enumerate}
        \item The singularity is Type II.
        \item There exists a sequence $(p_m,t_m)$ so that $t_m \to \omega$ and
    \begin{equation}
        \frac{\tau}{\kappa^2}(p_m,t_m) \to \infty.
    \end{equation}
    \end{enumerate}
\end{theorem}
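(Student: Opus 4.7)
My strategy is to establish part (2) directly via a contradiction argument based on the curvature--torsion entropy $\mathcal E$ introduced earlier, and then to deduce part (1) as a short corollary by invoking Altschuler's planarity theorem together with the classification of compact homothetic shrinkers.

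\emph{Proving (2).} Assume for contradiction that $\tau/\kappa^2\le C$ uniformly on $\gamma_t$ for all $t$ in a left-neighborhood $[\omega-\epsilon,\omega)$ of the singular time. I would then analyze the evolution of $\mathcal E(\gamma_t)$ along the flow. The expected structure is that $d\mathcal E/dt$ decomposes into a favorable, sign-definite part together with error terms, polynomial in $\kappa,\tau,\kappa_s,\tau_s$, that are controlled by $\tau/\kappa^2$ and its derivatives. Under the assumed uniform bound, these errors become integrable in time and $\mathcal E(\gamma_t)$ stays bounded (or even converges to a finite limit) as $t\to\omega$. On the other hand, since twisted CSF must form a singularity and the hypothesis prevents loss of twistedness, a blow-up analysis along an essential sequence $(p_m,t_m)$ should force $\mathcal E(\gamma_t)$ to diverge (for instance via a $\log|\tau|$ or $\log\kappa$ contribution to $\mathcal E$), giving the desired contradiction.

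\emph{Deducing (1) from (2).} Suppose the singularity were Type I. Parabolic rescaling $\tilde\gamma_t=\lambda(t)\gamma_t$ with $\lambda(t)=(\omega-t)^{-1/2}$ produces a subsequence converging smoothly to a closed homothetic shrinker $\gamma_\infty\subset\mathbb{R}^3$. Altschuler's theorem applied pointwise to $\gamma_\infty$ forces it to be planar, and by the Abresch--Langer classification it is a planar self-shrinker on which $\tilde\kappa$ is bounded above and below by positive constants and $\tilde\tau\equiv 0$. Using
\[
\frac{\tau}{\kappa^2}(p,t)=\frac{1}{\lambda(t)}\cdot\frac{\tilde\tau}{\tilde\kappa^2}(\tilde p,t),
\]
the uniform vanishing of $\tilde\tau/\tilde\kappa^2$ combined with $\lambda\to\infty$ yields $\sup_{p\in\gamma_t}\tau/\kappa^2\to 0$ as $t\to\omega$, contradicting the sequence produced in part (2). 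Hence the singularity must be Type II.

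\emph{Principal obstacle.} The main technical difficulty lies in identifying the precise entropy $\mathcal E$ and deriving a sharp evolution inequality for it on twisted flows. The Frenet--Serret coupling between $\kappa$ and $\tau$ yields a system of quasilinear parabolic equations, and producing the integration-by-parts cancellations needed to simultaneously control both the curvature and torsion contributions to $d\mathcal E/dt$ is delicate. A further subtlety is that the sequence producing $\tau/\kappa^2\to\infty$ need not be essential, so pointwise blow-up arguments alone are insufficient and the contradiction has to be extracted from the behavior of the global functional $\mathcal E$.
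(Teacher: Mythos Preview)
Your plan is essentially the paper's argument: both parts hinge on the curvature--torsion entropy, and your deduction of (1) from (2) via Altschuler plus Abresch--Langer is exactly how the paper gives its second proof of the Type~II claim. The computation you flag as the principal obstacle is in fact clean---the paper obtains
\[
\partial_t\mathcal E=\int_{\gamma_t}\Bigl(-\kappa\tau^2\log\tfrac{\tau}{\kappa^2}+\kappa(\partial_s\log\tau)^2+2\kappa\tau^2\Bigr)\,ds,
\]
and the ingredient that makes your ``errors integrable in time'' precise is the identity $\int_{\gamma_t}\kappa\tau^2\,ds=-\partial_t\!\int_{\gamma_t}\kappa\,ds$, so that under $\log(\tau/\kappa^2)\le C_1$ one has $\partial_t\mathcal E\ge (C_1-2)\,\partial_t\!\int_{\gamma_t}\kappa\,ds$ and the uniform lower bound on $\mathcal E$ follows directly from monotonicity of total curvature. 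One small organizational difference: the paper establishes (1) first and then invokes Grim Reaper convergence to force $\mathcal E\to-\infty$ in the proof of (2); in your ordering you must argue divergence of $\mathcal E$ without yet knowing the singularity type, which still closes (Type~I gives uniform $\tau/\kappa^2\to 0$ on a curve with total curvature bounded below, Type~II supplies a Grim Reaper arc) but is worth making explicit.
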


Note that the sequences where $\frac{\tau}{\kappa^2}(p_m,t_m) \to \infty$ cannot be essential, and may not even be a blow-up sequence at all. To establish this fact, we find a quantity that is (nearly) increasing under the flow. In particular, we will consider the following entropy functional, whose behavior will be well-controlled under the flow.
\begin{definition}
For a twisted curve in $\mathbb{R}^3$, the curvature-torsion entropy is defined to be the quantity
\begin{equation}\label{CurvatureTorsionEntropy}
     \int_\gamma \kappa \log\left( \frac{\tau}{\kappa^2} \right) \, ds.
\end{equation}
\end{definition}
One could extend this definition to non-twisted curves by squaring the argument of the logarithm. However, this integral will essentially always be $-\infty$ whenever there is a flat point (i.e., a point with $\tau=0$), so we will not consider this generalization.


\section{Nearly monotonic functionals}
To prove the main result, we must show that the curvature-torsion entropy is nearly non-decreasing under the flow. Before doing so, we start with a simpler proof that a twisted curve cannot develop a Type I singularity. This result was previously shown in unpublished work of the author \cite{Khan}. Since the proof is very short, we include it here.

\begin{theorem}
\label{Type II singularities}
Suppose $\gamma$ is a twisted curve in $\mathbb{R}^3$. Under curve shortening flow, one of the following two possibilities occurs.
\begin{enumerate}
    \item There exists a time $t_0$ where $\gamma_{t_0}$ has a point with vanishing curvature. Furthermore, after this time $\gamma_t$ has a flat point until the singular time.
    \item $\gamma_t$ develops a Type II singularity.
\end{enumerate}
\end{theorem}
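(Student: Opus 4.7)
The plan is to prove the dichotomy by contrapositive: if $\gamma_t$ remains twisted throughout $[0,\omega)$, i.e., $\kappa>0$ and $\tau\neq 0$ everywhere, then the singularity at $\omega$ must be Type II. The second conclusion—persistence of flat points once an inflection has appeared—will be argued separately by a local Frenet-frame analysis near the first inflection time.

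For the main step I would argue by contradiction. Assume $\gamma_t$ is twisted throughout and the singularity is Type I; by connectedness of $\gamma$ and continuity of $\tau$, we may take $\tau>0$. Standard Type I blow-up theory, via the parabolic rescaling $\tilde\gamma_j=(\omega-t_j)^{-1/2}(\gamma_{t_j}-p_0)$, produces a $C^\infty_{\mathrm{loc}}$ subsequential limit that is an ancient CSF solution shrinking self-similarly. Applying Altschuler's planarity theorem \cite{A} together with the Abresch--Langer classification \cite{AL}, this limit is a closed planar self-shrinker, so $\tilde\tau\equiv 0$ on it. I then detect the non-planarity of $\gamma_t$ with a scale-invariant quantity. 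A clean candidate is the total torsion $Q(t)=\int_{\gamma_t}\tau\,ds$, which equals the length of the binormal indicatrix $B:S^1\to S^2$ and is invariant under parabolic rescaling because $\tilde\tau\,d\tilde s=\tau\,ds$. By $C^\infty$ convergence of the rescaled curves to the planar shrinker, $Q(t_j)=\int\tilde\tau\,d\tilde s\to 0$. Thus the contradiction reduces to showing that $Q(t)$ is bounded below by a positive constant on $[0,\omega)$.

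The hard part will be this uniform positive lower bound on $Q(t)$. The approach is to combine Altschuler's evolution equations for $\kappa$ and $\tau$ under CSF in $\mathbb{R}^3$ with $\partial_t\,ds=-\kappa^2\,ds$ and integrate by parts on the closed curve; one would aim for an estimate of the form $|\partial_t Q|\le C(t)Q$ with $C(t)$ integrable on $[0,\omega)$, after which Gronwall yields $Q(t)\ge Q(0)\exp(-\int_0^\omega C)>0$. Controlling $C(t)$ is delicate because $\kappa$ is unbounded as $t\to\omega$; the route is to pair the Type I upper bound $\kappa=O((\omega-t)^{-1/2})$ with the uniform smallness $\tau/\kappa\to 0$ coming from Altschuler, so that the potentially singular terms in $\partial_t Q$ are tamed. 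For the persistence of flat points, the heuristic is that at the first inflection time $t_0$ the binormal degenerates at $p_0$ and the signed torsion jumps sign across $p_0$, so by the intermediate value theorem $\tau$ must vanish somewhere nearby; a continuity-in-$t$ argument, or a parabolic maximum principle applied on the open set $\{\kappa>0\}$, should then propagate this zero of $\tau$ forward to the singular time. I expect this persistence step to be the most technically delicate part, since it requires a careful local analysis of the Frenet frame on both sides of the inflection while the curve continues to evolve.
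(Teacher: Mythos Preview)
Your overall strategy coincides with the paper's: use the total torsion $Q(t)=\int_{\gamma_t}\tau\,ds$ as the scale-invariant obstruction to a Type~I limit. However, you overcomplicate what you flag as ``the hard part.'' If you actually carry out the integration by parts you allude to, using $\partial_t(ds)=-\kappa^2\,ds$ and Altschuler's equation $\partial_t\tau=2\kappa^2\tau+\partial_s\bigl(\tfrac{2\tau}{\kappa}\partial_s\kappa\bigr)+\partial_s^2\tau$, the divergence terms integrate to zero on the closed curve and you obtain
\[
\partial_t Q \;=\; \int_{\gamma_t}\kappa^2\tau\,ds \;>\;0.
\]
Thus $Q$ is strictly increasing and $Q(t)\ge Q(0)>0$ for all $t<\omega$; no Gronwall inequality, no integrable $C(t)$, and no delicate balancing of $\kappa\to\infty$ against $\tau/\kappa\to 0$ is needed. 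This one-line monotonicity is exactly the paper's argument, and it removes all of the difficulty you anticipate. Your route to the contradiction ($Q(t_j)\to 0$ via scale-invariance and $C^\infty$ convergence to a planar self-shrinker) is essentially equivalent to the paper's, which instead observes $\sup_{\gamma_t}\tau\cdot L_t\ge Q(t)>0$ while $\sup_{\gamma_t}\kappa\cdot L_t$ remains bounded near a Type~I singularity, forcing $\limsup_{t\to\omega}\sup(\tau/\kappa)>0$ in violation of the planarity theorem.

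For the second clause, the paper does not argue via sign changes of $\tau$ across an inflection as you propose; it proves the contrapositive directly by the strong maximum principle applied to the evolution of $\tau$, which on the region $\{\kappa>0\}$ is a linear parabolic equation for $\tau$ whose zeroth-order coefficient multiplies $\tau$ itself. Hence as long as $\kappa>0$ everywhere, $\tau>0$ is preserved, so a flat point can only appear after an inflection. Both the paper and your sketch are somewhat informal about why a flat point then \emph{persists} for all later times.
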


\begin{proof}
Consider any curve $\gamma$ (not necessarily twisted) which develops a Type I singularity at time $\omega$. After rescaling, $\gamma_t$ approaches an Abresch-Langer solution \cite{AL} with finite winding number in the $C^\infty$ sense \cite{A}. This has two consequences.
\begin{enumerate}
    \item All blow up sequences are essential. In other words, for times close to the singularity, the maximum curvature is a bounded multiple of the minimum curvature. As a result, there exists a time $t_0 \in [0, \omega)$ such that for all times afterward, $\gamma_t$ has no inflection points, which implies that torsion is defined everywhere on the curve. 
    \item Since $\gamma$ converges to some Abresch-Langer curve, the functional $D(t) = \sup \k_t \cdot L_t$ converges to a finite limit as the times goes to $\omega$, where $L_t$ is the length of the curve. In particular, this quantity remains bounded.
\end{enumerate}

Now we consider a solution $\gamma_t$ which is twisted after $t_0$. For the following calculations, we parameterize $\gamma_t$ smoothly by $u \in [0, 2\pi)$ and suppose that the curvature has velocity $v$. To establish the result, we show that for a twisted curve, the total torsion is increasing.
\begin{equation}\label{TotalTorsion}
    \int_{\gamma_t} \tau  \, ds = \int_0^{2\pi} \tau \cdot v \, du.
\end{equation}

To compute the evolution of this quantity, we use the evolution equations for $v$, $\kappa$ and $\tau$ (derived in \cite{A} and \cite{AG}).
\begin{eqnarray}
\partial_t v & = & -\kappa^2 v \nonumber \\
\partial_t \kappa &=& \partial_s^2 \k + \k^3 -\k\tau^2  \label{eq:Curvature Evolution}  
 \\
\partial_t \tau&=& 2\k^2\tau + \partial_s \left(\frac{2\tau}{\kappa}
\partial_s \kappa \right) + \partial_s^2 \tau. \label{eq:Torsion Evolution} \nonumber
\end{eqnarray}

Using these, we find the following:
\begin{eqnarray*}
 \partial_t \int_{\gamma_t} \tau\cdot v \, du  & = & \int_0^{2\pi} (\partial_t \tau) \cdot v + (\partial_t v) \cdot \tau \, du\\
& = &  \int_0^{2\pi} \left( 2\k^2\tau + \partial_s \left(\frac{2\tau}{\kappa} 
\partial_s \kappa \right) + \partial_s^2 \tau \right)  v -\k^2 \, v \, \tau \, du\\
 & = & \int_{\gamma_t} \k^2\, \tau \, ds + \int_\gamma \partial_s \left(\frac{2\tau}{\kappa} 
\partial_s \kappa \right) + \partial_s^2 \tau \, ds \\
 & = & \int_{\gamma_t} \k^2\, \tau \, ds
\end{eqnarray*}


Therefore, the $L^1$ norm of $\tau$ is increasing and approaches a positive (possibly infinite) limit as $t$ goes to $\omega$. However, \[\sup_{p \in \gamma_t} \tau (p)\cdot L_t \geq ||\tau||_1(t) > 0. \] 
When combined with the fact that $D(t)$ remains bounded, this implies that
 \[\lim_{t \to \omega}\sup_{p \in \gamma_t} \frac{\tau}{\k} > 0,\]
which contradicts the planarity theorem. As a result, the curve must develop a Type II singularity. To complete the proof, we show that if a curve is twisted, it cannot become untwisted unless a point of zero curvature appears.
\begin{proposition}
Suppose we have a family of curves $\gamma_t$ in $\R^3$ which satisfy Equation \eqref{eq:CSF} and that $\gamma_0$ is twisted. Furthermore, suppose that $\gamma_t$ has no inflection points for $t\in[0,t_0]$. Then for all $t\in[0,t_0]$, $\gamma_t$ is also twisted.
\end{proposition}
    This proposition follows from the maximum principle. Suppose that we have a point $(p,t)$ so that $\tau(p,t)=0$ and this is the first time when $\tau$ is ever non-positive. Since both $\tau$ and $\partial_s \tau$ are zero, we have the following.
\begin{eqnarray*}
\partial_t \tau(p) & = & \partial_s^2 \tau + 2 (\partial_s \log \kappa) (\partial_s \tau) + 2 \tau \left(\partial^2_s \log \k + \k^3 \right) \\
& = & \partial_s^2 \tau \geq 0,
\end{eqnarray*}
By the strong parabolic maximum principle, $\tau$ must remain strictly positive. \end{proof}
This argument fails at inflection points, where torsion is not defined. As such, this result shows that in order for a Type I singularity to develop from a twisted curve, an inflection point must emerge and afterward a flat point is created where the inflection point occurred.\footnote{It might seem curious that a single flat point will emerge since we initially expect that the torsion must switch signs twice. However, the Frenet-Serret frame bundle can become non-trivial after the emergence of an inflection point.}

\subsection{The main result}

We now turn our attention to proving the main result. To begin, we compute several more time derivatives.
    \begin{eqnarray} \label{Total curvature}
        \partial_t \int_{\gamma_t} \kappa \,ds   &=&  - \int_{\gamma_t} \k\, \tau^2 \, ds 
        \end{eqnarray}
        \begin{eqnarray}
        \label{kappalogkappa}
 \partial_t \int_{\gamma_t} \kappa \log \kappa \,ds   & = & \int_{\gamma_t} (\log \kappa+1) (\partial_s^2 \kappa +\kappa^3-\kappa \tau^2) - \kappa^3 \log \kappa ds \\
  & = & \int_{\gamma_t} -\frac{(\partial_s \kappa)^2}{\kappa} - (\kappa \log \kappa) \tau^2 + \kappa^3 - \kappa \tau^2 \, ds \nonumber \\ \label{kappalogtau}
 \partial_t \int_{\gamma_t} \kappa \log \tau \,ds   & = & \int_{\gamma_t} \left(  \partial_s^2 \kappa + \kappa^3 - \kappa \tau^2 \right)\log \tau \\ 
 & &+ \frac{\kappa}{\tau}\left( 2 \kappa^2 \tau + \partial_s (2 \tau \partial_s \log \kappa) +\partial_s^2 \tau  \right) - \kappa^3 \log \tau \,ds \nonumber \\
& = & \int_{\gamma_t} - \kappa \tau^2 \log \tau + 2 \kappa^3 - 2\frac{(\partial_s \kappa)^2}{\kappa} + \kappa (\partial_s \log \tau)^2 \, ds \nonumber
\end{eqnarray}

Combining Equations \eqref{kappalogkappa} and \eqref{kappalogtau}, we find the following evolution equation for the curvature-torsion entropy.
\begin{eqnarray} \label{CurvatureTorsionEntropyevolution}
\partial_t \int_{\gamma_t} \kappa \log\left( \frac{\tau}{\kappa^2} \right) \, ds 
 & = &\int_{\gamma_t} -\kappa \tau^2 \log\left( \frac{\tau}{\kappa^2} \right) + \kappa (\partial_s \log \tau)^2+ 2 \kappa \tau^2 \, ds 
\end{eqnarray}

This immediately provides a second proof that twisted curves cannot have Type I singularities. In particular, for a Type I singularity $\frac{\tau}{\kappa^2}$ goes to zero uniformly (by Altschuler's planarity theorem and the fact that all sequences to the singular time are essential). On the other hand, once $\sup \log\left(\frac{\tau}{\kappa^2}\right)<2$, the curvature-torsion entropy is increasing. 

Furthermore, we can also show that $\log\left(\frac{\tau}{\kappa^2}\right)$ blows up along some sequence. Suppose that there is a uniform bound \[ \log\left(\frac{\tau}{\kappa^2}\right) < C_1. \]
Then Equation \eqref{CurvatureTorsionEntropy} must go to negative infinity as $t$ goes to $\omega$. To see this, consider the region $S \subset \gamma_t$ where $\log\left(\frac{\tau}{\kappa^2}\right)>0$. On this set,
\[\int_S \kappa \log\left(\frac{\tau}{\kappa^2}\right) \,ds \leq C_1 \int_S \kappa \, ds \leq C_1 \int_{\gamma_0} \kappa \, ds, \]
(since the total curvature is decreasing). On the other hand, $\gamma_t$ has a region that converges to a Grim Reaper curve (whose curvature-torsion entropy is $-\infty$). However, we can estimate the left hand side of Equation \eqref{CurvatureTorsionEntropyevolution} by
\begin{eqnarray} 
\partial_t \int_{\gamma_t} \kappa \log\left( \frac{\tau}{\kappa^2} \right) \, ds & \geq& \int_{\gamma_t} - (C_1-2) \kappa \tau^2 \,ds\\
& = & (C_1-2) \partial_t \int_{\gamma_t} \kappa \,ds. \nonumber
\end{eqnarray}
However, since the total curvature decreases to a positive limit as $t$ goes to $\omega$, the difference between the curvature-torsion entropy at the present time and the singular time is bounded, which gives a contradiction.

\subsection{Points with large torsion and curvature}

Theorem \ref{Main Theorem} shows that the potential singularities of a twisted curve are highly unusual, and we suspect they cannot occur. One strategy would be to try to find an essential sequence so that $ \frac{\tau}{\kappa}$ does not go to zero. Although we cannot show this (or even that $\log\left( \frac{\tau}{\kappa^2} \right)$ goes to infinity along a blow-up sequence), we can show a weaker result in this direction.

\begin{proposition}
Suppose $\gamma$ is a curve in $\mathbb{R}^3$ which develops a twisted singularity. Then one of the following two possibilities occurs:
\begin{enumerate}
    \item There exists a sequence $(p_n,t_n)$ so that $\kappa(p_n,t_n)\to 0$.
    \item There is a sequence $(p_n,t_n)$ so that both \[\tau \textrm{ and }\left(2\kappa^2+2\partial_s^2 \log \kappa\right) (\omega-t_n)^\alpha\] tend to infinity for $\alpha<1$. Furthermore, this sequence consists of the points whose torsion is maximized on $\gamma_{t_n}$.
\end{enumerate}
\end{proposition}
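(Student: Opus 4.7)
The plan is to argue by contradiction, combining the torsion evolution at its spatial maximum with the unboundedness of $\tau/\kappa^2$ furnished by Theorem \ref{Main Theorem}. I would begin by assuming alternative (1) fails, so $\kappa \geq c > 0$ uniformly on $\gamma_t$ for all $t \in [0,\omega)$. By Theorem \ref{Main Theorem}(2) there is a sequence along which $\tau/\kappa^2 \to \infty$, and combined with the uniform lower bound $\kappa \geq c$ this forces $\tau_{\max}(t) := \sup_{p \in \gamma_t} \tau(p,t)$ to be unbounded as $t \to \omega$. For each $t$ let $p(t) \in \gamma_t$ be a point achieving $\tau_{\max}(t)$.

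Next, I would derive a differential inequality for $\tau_{\max}$. At $p(t)$ one has $\partial_s \tau = 0$ and $\partial_s^2 \tau \leq 0$, so expanding \eqref{eq:Torsion Evolution} and dropping the non-positive terms gives
\begin{equation*}
\partial_t \tau(p(t),t) \leq \tau(p(t),t)\bigl(2\kappa^2 + 2\partial_s^2 \log \kappa\bigr).
\end{equation*}
Writing $F := 2\kappa^2 + 2\partial_s^2 \log \kappa$, Hamilton's trick upgrades this to the Dini inequality $\frac{d}{dt}\log \tau_{\max}(t) \leq F(p(t),t)$.

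The heart of the proof is a contrapositive case analysis that forces the two blow-ups onto a single sequence. Fix $\alpha < 1$ and suppose for contradiction that there exist $L>0$ and $t_0 \in [0,\omega)$ such that every $t \in [t_0,\omega)$ satisfies either $\tau_{\max}(t) \leq L$ or $F(p(t),t)(\omega-t)^\alpha \leq L$. Decompose $[t_0,\omega) = B \sqcup B^c$, where $B := \{t : \tau_{\max}(t) \leq L\}$ is closed by continuity of $\tau_{\max}$. On $B^c$ the second alternative forces $F(p(t),t) \leq L(\omega-t)^{-\alpha}$. For $t \in B^c$ let $s^* := \sup(B \cap [t_0,t])$, or $s^* := t_0$ if the set is empty; then $(s^*,t] \subset B^c$ and $\tau_{\max}(s^*) \leq \max(L,\tau_{\max}(t_0))$. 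Integrating the differential inequality over $(s^*,t]$,
\begin{equation*}
\log \tau_{\max}(t) - \log \tau_{\max}(s^*) \leq L \int_{s^*}^{t}(\omega-s)^{-\alpha}\,ds \leq \frac{L}{1-\alpha}(\omega-t_0)^{1-\alpha},
\end{equation*}
which together with the bound $\log \tau_{\max}(t) \leq \log L$ on $B$ gives a uniform upper bound on $\log \tau_{\max}$ over $[t_0,\omega)$, contradicting its unboundedness. Hence for every $L$ the set $\{t : \tau_{\max}(t)>L \text{ and } F(p(t),t)(\omega-t)^\alpha>L\}$ accumulates at $\omega$, and extracting points with $L = n \to \infty$ yields the required sequence of maximizers of $\tau$.

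The main obstacle I anticipate is precisely this synchronization. Separately, the unboundedness of $\tau_{\max}$ follows from Theorem \ref{Main Theorem}, and the unboundedness of $F(p(\cdot),\cdot)(\omega-\cdot)^\alpha$ follows from $\int^\omega F(p(s),s)\,ds = \infty$ (a direct consequence of the ODE together with unboundedness of $\log\tau_{\max}$). Coupling the two blow-ups on a common sequence is exactly what the interval-by-interval case analysis above accomplishes; the one technical point to check is the regularity required for Hamilton's trick in the presence of potentially non-unique maximizers of $\tau(\cdot,t)$.
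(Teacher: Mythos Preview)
Your proof is correct and follows essentially the same strategy as the paper: assume $\kappa$ is bounded below, use Theorem~\ref{Main Theorem} to force $\sup\tau\to\infty$, derive the differential inequality $\partial_t\log\tau_{\max}\le 2\kappa^2+2\partial_s^2\log\kappa$ at the spatial maximum via the maximum principle, and then argue by contradiction using the integrability of $(\omega-t)^{-\alpha}$ for $\alpha<1$. The only differences are cosmetic: the paper bounds $\log\tau_{\max}$ by a dyadic subdivision of $[t_0,\omega)$ and a geometric series, whereas you integrate $(\omega-s)^{-\alpha}$ directly; and your $B/B^c$ case split makes the synchronization of the two blow-ups along a single sequence of maximizers explicit, a point the paper leaves implicit.
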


 If we assume that $\kappa$ is bounded away from zero near the singular time, then Theorem \ref{Main Theorem} shows that $\log( \tau)$ must go to infinity, and thus we must show that we can extract a sequence so that $(2\kappa^2+2\partial_s^2 \log \kappa)(\omega-t)^\alpha$ blows up.

\begin{eqnarray*}
\partial_t \log (\tau) & = & \frac{1}{\tau} \left(2\k^2\tau + \partial_s \left(\frac{2\tau}{\kappa}
\partial_s \kappa \right) + \partial_s^2 \tau  \right) \\
& = & 2\kappa^2 +2 \partial_s^2 \log \kappa +2(\partial_s \log \tau )(\partial_s \log \kappa)  + \frac{ \partial_s^2 \tau}{\tau} 
\end{eqnarray*}
At the maximum of $\log( \tau)$, the maximum principle implies that
\begin{eqnarray*}
  \partial_s \log \tau  = 0 \hspace{8pt} \textrm{ and } \hspace{8pt}
    \frac{\partial_s^2 \tau}{\tau} \leq 0.
\end{eqnarray*}
At the maximum of $\log(\tau)$, we find that
\[\partial_t \log (\tau) \leq 2\kappa^2 +2 \partial_s^2 \log \kappa. \]
However, in order for $\sup \log (\tau)$ to go to infinity in finite time, Hamilton's maximum principle shows we must be able to extract a subsequence where
\[2\kappa^2+2\partial_s^2 \log \kappa \]
goes to infinity.

To show the stronger estimate $(2\kappa^2+2\partial_s^2 \log \kappa(p_n,t_n))(\omega-t)^\alpha \to \infty$, we suppose that this is not the case. Then, we divide up the time interval $(t,\omega)$ into sub-intervals $(t_i, \frac{\omega+t_i}{2})$ and apply the maximum principle on each sub-interval. Doing so, we obtain the estimate
\[ \sup_{t\leq t_n} \log \tau(p,t) -\sup \log \tau (p,t_0)  \leq \sum_{i=0}^{n} \frac{C_2}{2^{i(1-\alpha)}}.  \] This estimate is uniformly bounded in $n$, giving a contradiction. 

A similar argument shows that we can extract sequences so that both $((2+\rho)\kappa^2+2\partial_s^2 \log \kappa)(t-\omega)^\alpha$ and $\kappa^\rho \tau$ go to infinity for any $\rho \in (0,1].$ However, we cannot conclude that $\tau$ must go to infinity on such a sequence, so we do not have a geometric application for this fact.

\subsection{A related quantity}

As can be seen from Theorems \ref{Main Theorem} and \ref{Type II singularities}, it is possible to control the behavior of curve shortening flow by finding quantities which are monotone (or nearly monotone) under the flow. There are several other quantities whose evolution is quite simple, and it might be possible to use them to control the singularities which emerge under the flow. For instance, the quantity $\int_{\gamma_t} \tau \log\left( \frac{\tau^2}{\kappa^4} \right) \, ds$ evolves as follows:\footnote{We have squared the terms in the argument of the logarithm so that this quantity is well-defined for arbitrary space curves without assuming they are twisted.}
\begin{eqnarray*}
\partial_t \int_{\gamma_t} \tau \log\left( \frac{\tau^2}{\kappa^4} \right) \, ds  =   \int_{\gamma_t} \kappa^2 \tau \log\left( \frac{\tau^2}{\kappa^4} \right) +  \tau \left((\partial_s \log \kappa^2)^2-\frac{1}{2}(\partial_s \log \tau^2)^2 \right) +4 \tau^3 \, ds.
\end{eqnarray*}

 We do not know of a direct geometric application for this quantity. However, the fact that the evolution is so simple suggests that it may be useful for controlling the formation of singularities. 

\section{A heuristic for the emergence of flat points}

It remains an open question to determine the limiting behavior for generic initial data for spatial curve shortening flow. However, it is reasonable to expect that generic curves converge to round points, the same as for embedded curves in the plane. There are several cases where spatial curve shortening flow is known to converge to a round point. For instance, if the curve is embedded on a standard sphere, we have the following result (a proof can be found in \cite{H}).

\begin{theorem} \label{Spherical curves}
Given a curve $\gamma_0$ embedded on a standard sphere $S^2$ in $\R^3$, under curve shortening flow $\gamma_t$ remains embedded on a shrinking sphere and converges to a round point.
\end{theorem}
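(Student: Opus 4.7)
The plan is to show that the curve is confined to a family of concentric spheres shrinking at an explicit rate, and then to reduce to the geodesic curvature flow on a fixed unit sphere where classical convergence results apply.

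For the first step, I would compute the evolution of $f = |\gamma|^2$ under CSF. The Frenet identity $\partial_s T = \kappa N$ yields the spatial identity $\partial_s^2 f = 2 + 2\kappa\,\gamma\cdot N$, and $\partial_t \gamma = \kappa N$ gives $\partial_t f = 2\kappa\,\gamma\cdot N$, so that
\begin{equation*}
    \partial_t f = \partial_s^2 f - 2
\end{equation*}
along the curve. The ansatz $R(t)^2 = R_0^2 - 2t$ (constant in $s$) satisfies this trivially, so $g = f - R(t)^2$ solves the heat equation on the curve with vanishing initial data. By the parabolic maximum principle, $g \equiv 0$, and hence $\gamma_t \subset S^2_{R(t)}$ with $R(t)=\sqrt{R_0^2-2t}$ for all $t \in [0, T)$, where $T = R_0^2/2$ is the collapse time of the ambient sphere.

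Next I would rescale to a fixed sphere via $\tilde{\gamma} = \gamma/R(t)$ and $d\tau = dt/R(t)^2$. Using $R_\tau = -R$ together with the decomposition $\kappa N = \kappa_g \vec{N}_g - (1/R)\tilde{\gamma}$ of the extrinsic curvature vector into parts tangent to and normal to the sphere, the radial contributions cancel exactly, and one obtains $\partial_\tau \tilde{\gamma} = \tilde{\kappa}_g \vec{N}_g$ on the unit sphere $S^2$, which is precisely the intrinsic geodesic curvature flow. Since $\tau = -\log(R(t)/R_0)$ runs to infinity as $t \to T$, the rescaled flow exists for all $\tau \in [0,\infty)$. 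Classical results on curve shortening flow on $S^2$ (Gage-Hamilton, Grayson) then imply that an initially embedded $\tilde{\gamma}$ remains embedded for all $\tau$ and either contracts to a round point in finite rescaled time or converges smoothly to a great circle as $\tau \to \infty$. In either case $\gamma_t$ converges to a round point as $t \to T$: in the great-circle scenario, $\gamma_t$ is asymptotic to a round shrinking circle of radius $R(t)$, the canonical self-shrinker of CSF in $\mathbb{R}^3$.

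The main technical input is the last step, which draws on the Gage-Hamilton-Grayson theory of curve shortening flow on Riemannian surfaces; the geometric core of the argument—the confinement to concentric spheres and the reduction to geodesic curvature flow on $S^2$—is essentially an elementary computation once the explicit ansatz $|\gamma_t|^2 = R_0^2 - 2t$ is identified.
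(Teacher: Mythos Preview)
The paper does not actually prove this theorem; it simply states the result and refers the reader to \cite{H} for a proof. So there is no in-paper argument to compare against. Judging from the title of \cite{H} (a ``distance comparison principle and Grayson type theorem''), that reference most likely proceeds via a Huisken-style two-point chord/arc estimate directly in $\mathbb{R}^3$, which is a rather different line of attack from yours.

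Your argument is the standard and correct one. The computation $\partial_t|\gamma|^2=\partial_s^2|\gamma|^2-2$ together with the maximum principle pins the curve to the sphere $|\gamma|^2=R_0^2-2t$, and the rescaling $\tilde\gamma=\gamma/R$, $d\tau=dt/R^2$ converts the ambient flow into the intrinsic curve shortening flow on the unit $S^2$, where Grayson's theorem applies. One small inaccuracy: the sentence ``the rescaled flow exists for all $\tau\in[0,\infty)$'' is not quite right as written, since in the non-bisecting case the intrinsic flow on $S^2$ extinguishes at a finite $\tau_0$ (and hence the ambient flow at some $t_0<T$). You do immediately go on to treat both alternatives, so the logic is fine; just rephrase so that the existence interval of the rescaled flow is determined by Grayson's dichotomy rather than asserted a priori. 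It would also be worth one sentence noting that $\kappa^2=\kappa_g^2+R^{-2}$, so the ambient and intrinsic singular times coincide under the time change, and that in the finite-$\tau_0$ case the Gage--Grayson roundness on $S^2$ transfers to roundness of the Type~I blow-up in $\mathbb{R}^3$ because small geodesic circles are asymptotically Euclidean circles.
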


Note that the total torsion of a spherical curve is zero, which immediately implies that any such curve has flat points. In fact, Sedykh's theorem shows there are at least four such points \cite{S}. Furthermore, in recent work, Litzinger showed that curves whose entropy is at most $2$ converge to round points \cite{Litzinger}.

\begin{theorem}
     Suppose that ${\gamma}$ is a smooth curve whose entropy 
     \begin{equation}
\lambda({\gamma})=\sup _{x_0 \in \mathbb{R}^n, t_0>0}\left(4 \pi t_0\right)^{-\frac{1}{2}} \int_{\gamma_t} \mathrm{e}^{-\frac{\left|x-x_0\right|^2}{4 t_0}} d \mu  
\end{equation}
satisfies $\lambda({\gamma}) \leq 2$
Then under curve shortening flow, ${\gamma_t}$ converges to a round point.
\end{theorem}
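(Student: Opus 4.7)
The plan is to combine Huisken-type entropy monotonicity with a classification of low-entropy singularity models for curve shortening flow in $\R^n$.

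First, I would establish that $\lambda$ is non-increasing along curve shortening flow. This is a consequence of Huisken's monotonicity formula: for any fixed backward heat kernel based at $(x_0, t_0)$ with $t_0 > t$, the Gaussian-weighted arc length
\[
(4\pi(t_0-t))^{-\frac{1}{2}} \int_{\gamma_t} e^{-|x-x_0|^2/(4(t_0-t))} \, d\mu
\]
is non-increasing in $t$, and this property is preserved upon taking the supremum over base points. In particular, $\lambda(\gamma_t) \leq \lambda(\gamma_0) \leq 2$ throughout $[0, \omega)$.

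Next, I would analyze the singularity at time $\omega$ via parabolic rescaling. Along any blow-up sequence $(p_m, t_m)$ with $\k(p_m, t_m) \to \infty$, rescale about $(p_m, t_m)$ and, using the uniform entropy bound together with Huisken-type compactness, extract a smooth subsequential limit $\gamma_\infty$. In the Type I case $\gamma_\infty$ is a self-shrinker, while in the Type II case it is an eternal translating solution. Since $\lambda$ is lower-semicontinuous under smooth convergence on compact sets, the limit inherits $\lambda(\gamma_\infty) \leq 2$.

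The crux is a classification of admissible limits with entropy at most $2$. Self-shrinking solutions to curve shortening flow in $\R^n$ are planar (either directly from the shrinker equation $\k N = -x^\perp / 2$, or via the Altschuler-Yan-Jiao planarity theorem), and in the plane the closed self-shrinkers are precisely the Abresch-Langer curves \cite{AL}. A direct Gaussian computation gives $\lambda(\mathrm{circle}) = \sqrt{2\pi/e} < 2$, while each non-circular Abresch-Langer curve has entropy strictly exceeding $2$. The Grim Reaper, the only candidate Type II model, is similarly shown to satisfy $\lambda > 2$. These bounds force the tangent flow to be a shrinking circle; a standard smoothing and uniqueness-of-tangent-flow argument then upgrades this to convergence of the entire curve to a round point.

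The main obstacle, and the heart of the argument, is establishing the strict lower bounds $\lambda > 2$ for every non-circular Abresch-Langer shrinker and for the Grim Reaper. These reduce to explicit Gaussian integrals against these specific curves, but the sharpness of the threshold means one must work carefully at the critical boundary—in particular, a soft compactness argument alone will not suffice, since the Grim Reaper is noncompact and one needs quantitative control of $\lambda$ at infinity. Once these entropy bounds are secured, the remaining steps follow well-established mean curvature flow machinery.
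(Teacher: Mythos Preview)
The paper does not contain a proof of this theorem: it is quoted as a result of Litzinger \cite{Litzinger} and stated without argument. So there is no ``paper's own proof'' to compare your proposal against.

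That said, your outline is the right general architecture for results of this kind (entropy monotonicity, blow-up compactness, classification of low-entropy models), but it contains a concrete error that would break the argument as written. The Grim Reaper does \emph{not} satisfy $\lambda>2$; its entropy is exactly $2$, since it is asymptotic to two parallel lines and the Gaussian density attains the value $2$ in the limit along the ends. Consequently, under the hypothesis $\lambda(\gamma)\leq 2$ you cannot exclude a Grim Reaper blow-up by a strict entropy inequality alone. Handling the boundary case requires an additional rigidity step: if a tangent flow with $\lambda=2$ actually arises, the equality case of Huisken's monotonicity forces the original flow to be self-similar on a backward time interval, which is incompatible with $\gamma$ being a smooth closed curve. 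You flagged the threshold as delicate, but your stated plan (``show $\lambda>2$ for the Grim Reaper'') is simply false and must be replaced by this rigidity argument. A similar caution applies to the Abresch--Langer curves: you should verify that every non-circular closed self-shrinker has entropy \emph{strictly} greater than $2$, not merely at least $2$, or else invoke rigidity there as well.
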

 
Apart from these results, there is another reason why we do not expect initially twisted curves to remain twisted; the reaction terms for the curvature and torsion tend to create points of vanishing curvature. To see this, we ignore the spatial derivative terms in Equation \eqref{eq:Curvature Evolution} describing the evolution of $\kappa$ and $\tau$ and simply consider the system of coupled ODEs
 \begin{equation}
     \begin{aligned}
         \dot \kappa &=& \kappa^3-\kappa \tau^2 \\
         \dot \tau & = & 2 \kappa^2 \tau.
     \end{aligned}
 \end{equation}
This system reduces to the homogeneous equation
 \begin{equation*}
\frac{d \kappa}{d \tau} = \frac{1}{2} \left( \frac{\kappa}{\tau}-\frac{\tau}{\kappa} \right),
 \end{equation*}
whose solutions are circular arcs of the form
\[\kappa(\tau) = \sqrt{ C \tau - \tau^2}.\]
As such, no matter how large $\kappa$ is initially, the reaction ODE tends towards a situation where $\kappa$ vanishes. The full evolution equation for the torsion also includes a complicated term involving its derivatives and those of the curvature, so this analysis does not constitute a complete proof. Note however, that if the initial curve is a helix, which is a curve of constant curvature and torsion, this calculation shows that the limiting configuration is a straight line, but the limiting values of the torsion is non-zero.\footnote{The limiting value of the torsion is not the torsion of a straight line, which is undefined. Indeed, the limiting value depends on the initial curvature and torsion of the helix.}

\subsection*{Acknowledgements}
The author would like to thank Xuan Hien Nguyen, Mizan Khan and Kori Khan for their helpful comments. He is partially supported by Simons Collaboration Grant 849022 (``K\"ahler-Ricci flow and optimal transport").

\end{document}